%
%
%


\documentclass[oneside]{amsart}
  
\usepackage{amssymb}

\usepackage{graphicx}
\usepackage{algorithm,algorithmic}

\usepackage[latin1]{inputenc}
\usepackage{amssymb,amsmath}
\usepackage{verbatim}
\usepackage{color}
\usepackage{algorithm,algorithmic}
\usepackage{enumerate}
\usepackage{url}

\usepackage{geometry}

\usepackage{array}

\newtheorem{theorem}{Theorem}[section]
\newtheorem{lemma}[theorem]{Lemma}

\newtheorem{conjecture}[theorem]{Conjecture}
\newtheorem{corollary}[theorem]{Corollary}

\theoremstyle{definition}

\newtheorem{example}[theorem]{Example}
\newtheorem{assumption}[theorem]{Assumption}

\theoremstyle{remark}
\newtheorem{remark}[theorem]{Remark}

\numberwithin{equation}{section}

\newcommand\Z{\ensuremath{\mathbb Z}}
\newcommand\Q{\ensuremath{\mathbb Q}}\newcommand\R{\ensuremath{\mathbb R}}
\newcommand\C{\ensuremath{\mathbb C}}

\newcommand\GL{\operatorname{GL}}

\newcommand\M{\operatorname{M}}

\newcommand\PGL{\operatorname{PGL}}

\newcommand\SL{\operatorname{SL}}

\newcommand{\cH}{\mathcal{H}}
\newcommand{\fp}{{\mathfrak{p}}}

\newcommand{\fm}{{\mathfrak{m}}}
\newcommand{\fa}{{\mathfrak{a}}}
\newcommand{\fq}{{\mathfrak{q}}}


\def\cO{{\mathcal O}}


\newcommand{\mtx}[4]{\left(\begin{matrix}#1&#2\\#3&#4\end{matrix}\right)}

\newcommand{\smtx}[4]{\left(\begin{smallmatrix}#1&#2\\#3&#4\end{smallmatrix}\right)}

\def\M{\operatorname{M}}

\def\P{\mathbb P}

\def\fN{\mathfrak N}

\def\ol{\overline}

\newcommand{\ra}{\rightarrow}

\newcommand{\lra}{\longrightarrow}

\newcommand{\QQ}{\Q}
\newcommand{\PP}{\mathbb{P}}
\newcommand{\ZZ}{\Z}

\def\Xint#1{\mathchoice
{\XXint\displaystyle\textstyle{#1}}%
{\XXint\textstyle\scriptstyle{#1}}%
{\XXint\scriptstyle\scriptscriptstyle{#1}}%
{\XXint\scriptscriptstyle\scriptscriptstyle{#1}}%
\!\int}
\def\XXint#1#2#3{{\setbox0=\hbox{$#1{#2#3}{\int}$}
\vcenter{\hbox{$#2#3$}}\kern-.5\wd0}}

\begin{document}

\title[Darmon Points with Higher Conductor]{Elementary Matrix Decomposition and The Computation of Darmon Points with Higher Conductor}


\author{Xavier Guitart}
\address{ Universitat Polit\`ecnica de Catalunya, Barcelona \newline \indent Max Planck Institute for
Mathematics, Bonn}
\curraddr{}
\email{xevi.guitart@gmail.com}

\author{Marc Masdeu}
\address{Columbia University, New York}
\curraddr{}
\email{masdeu@math.columbia.edu}

\subjclass[2010]{11G40 (11F41, 11Y99)}

\date{\today}

\dedicatory{}


\begin{abstract}
We extend the algorithm of~\cite{darmon-green} and~\cite{darmon-pollack} for computing $p$-adic Darmon points on elliptic curves to the case of composite conductor. We also extend the algorithm of~\cite{darmon-logan} for computing ATR Darmon points to treat curves of nontrivial conductor. Both cases involve an algorithmic decomposition into elementary matrices in congruence subgroups $\Gamma_1(\fN)$ for ideals $\fN$ in certain rings of $S$-integers. We use these extensions to provide additional evidence in support of the conjectures on the rationality of Darmon points.
\end{abstract}

\maketitle

\section{Introduction}
Let $E$ be an elliptic curve over $\Q$ of conductor $pM$, with $p$ a prime not dividing $M$. Let $K$ be a real
quadratic field in which $p$ is inert and all the primes dividing $M$ are split, and denote by $\cH_p=\P^1(K_p)\setminus
\P^1(\Q_p)$ the
$K_p$-points of the $p$-adic upper half plane. 

A construction of Darmon \cite{darmon-integration} associates to every
$\tau\in \cH_p$ a local point $P_\tau\in E(K_p)$, which is defined as a certain period of the modular form $f$
corresponding to $E$ under the Modularity Theorem. The points $P_\tau$ are conjectured to be rational over ring
class fields of $K$, and to behave in many aspects as Heegner points.

An algorithm for the effective calculation of  $p$-adic Darmon points was given in \cite{darmon-green} and was improved in~\cite{darmon-pollack}. Some $P_\tau$'s  were computed in concrete examples and checked to be
$p$-adically close to global points, providing extensive numerical evidence in support of the conjectures. However, due to the restrictions imposed by the algorithm, only  elliptic curves of prime conductor $p$ (that is,
with $M=1$) could be treated.

In the articles \cite{darmon-green} and \cite{darmon-pollack} it is crucial to assume that $M=1$ when applying the ``continued fraction trick'' (see \cite[p. 42]{darmon-green}) in order to transform certain semi-indefinite
integrals into double integrals. The present article provides a different procedure for performing this
step when $M > 1$ provided that $E$ satisfies a mild condition
(see Assumption \ref{assumption} for its precise statement).

In other words, we extend the algorithm of \cite{darmon-green} and \cite{darmon-pollack} to a much larger class of curves. As an application, we compute $p$-adic Darmon points on 
curves of composite
conductor and we check that they are close to global points, which provides new experimental evidence in support of the
validity of Darmon's construction in its stated generality.

The method for transforming semi-indefinite integrals into double integrals is based
on an algorithmic decomposition into elementary matrices in congruence subgroups $\Gamma_1(\fN)$ for ideals $\fN$ in certain rings of $S$-integers. This can be seen as an effective version of the congruence subgroup problem. In particular we improve on a theorem of Cooke-Weinberger (see Corollary~\ref{corollary:cw}).

In addition, essentially the same method of elementary matrix decompositions can also be applied to ATR
points, a different instance of Darmon points introduced in \cite[\S
8]{darmon-book}  and \cite[\S4]{darmon-logan} for elliptic curves over totally real fields. Although the construction of
ATR points is radically different (for instance, they are defined by means of complex periods), their explicit computation has some formal similarities with respect to the $p$-adic setting. In particular the methods used until the present have all used a ``continued fraction trick'' which in this case only applies to
curves of trivial conductor (cf. \cite[p. 108]{Ga-th} for a discussion of this issue). The present article
provides also a method for computing ATR Darmon points in curves of non-trivial conductor.

The rest of the article is organized as follows. In Section \ref{section: preliminaries} we introduce an algorithm for
computing elementary matrix decompositions in certain congruence subgroups. We state it in a level of
generality so that it can be applied both to the $p$-adic and the ATR setting. In Section \ref{section: p-adic} we
recall the definition of $p$-adic Darmon points, we discuss the algorithm for computing them in curves of composite
level and we include some tables of numerical computations performed using it. Finally, in Section
\ref{section: ATR} we briefly recall ATR points and we make explicit the method for computing them in curves of
non-trivial conductor, as well as a detailed example of a numerical verification of Darmon's conjecture in this case.

\subsection*{Acknowledgments} It is a pleasure to thank John Voight for helpful comments on elementary matrix
decompositions,  Robert Pollack for clarifying to us some details on his implementation of the overconvergent
modular symbols algorithm, and Henri Darmon for many valuable observations. Guitart wants to thank the Max Planck
Institute for Mathematics for their hospitality and financial support during his stay at the Institute, where part of
the present work has been carried out.

\section{Preliminaries: elementary matrix decomposition in $\Gamma_1$}\label{section: preliminaries}
Let $F$ be a number field with ring of integers $\cO$, and let $S$ be a finite set of places of $F$ containing the
archimedean ones. Let $\cO_S$ denote the subring of $F$ consisting of those elements whose valuation is non-negative at
all the places outside $S$.

For an ideal $\fN$ of $\cO_S$, we denote by $\Gamma_1(\fN)$  the subgroup of $\SL_2(\cO_S)$ defined as
\begin{equation}\label{eq: def of Gamma_1}
 \Gamma_1(\fN)=\left\{ \gamma \in \SL_2(\cO_S) \colon \gamma\equiv \mtx 1 * 0 1 \pmod \fN\right\},
\end{equation}
and by $E_{1,\fN}$  the subgroup of $\Gamma_1(\fN)$ generated by the \emph{elementary matrices} of the form
\begin{equation}\label{eq: elementary matrices}
 \mtx 1 0 y 1 \text{ with } y\in \fN \ \text{ and } \mtx 1 x 0 1 \ \text{ with } x \in \cO_S.
\end{equation}

If the group of units $\cO_S^\times$ is infinite, then   $\Gamma_1(\fN)=E_{1,\fN}$; see for instance 
\cite{Vas} for a proof of this result, and also for its relation with the Congruence Subgroup Problem. Therefore, when
$\cO_S^\times$ is infinite every matrix in $\Gamma_1(\fN)$ can
be factored into a product of elementary matrices of type \eqref{eq: elementary matrices}. However, the proof given in
\cite{Vas} is not explicit, so it does not give rise to an algorithm for
systematically performing such decomposition. In this section we see that the results and
techniques introduced in \cite{Cooke-Weinberger} can be adapted  to provide --assuming GRH-- such an algorithm, in the
particular case where  $F$ has at least a real archimedean place. Thus from now on  we will assume
that $\cO_S^\times$ is infinite and that $F$ has at least a real place. 

The following lemma is stated and proved in  \cite[Lemma 2.2 (b)]{Bass-Milnor-Serre} using the notation of Mennicke
symbols. For the
convenience of the reader we restate it in terms of explicit matrix formulas.

\begin{lemma}\label{lemma: case a equi 1 mod c}
 Let $\gamma=\smtx a b c d$ be an element in $\Gamma_1(\fN)$. Suppose that $c= u +ta$ for some unit $u\in
\cO_S^\times$ and some $t\in \cO_S$, and let $T$ be the matrix
\[
 T=\mtx{1}{0}{u(a-1)}{1}\mtx{1}{u^{-1}}{0}{1}\mtx{1}{0}{t(a-1)}{1}\mtx{1}{0}{-c}{1}.
\]
Then $T\gamma=\smtx 1 x 0 1$ for some $x\in \cO_S$. In particular,
\begin{equation}\label{eq: dec of gamma}
 \gamma=\mtx{1}{0}{c+t(1-a)}{1}\mtx{1}{-u^{-1}}{0}{1}\mtx{1}{0}{u(1-a)}{1}\mtx{1}{x}{0}{1
}.
\end{equation}

\end{lemma}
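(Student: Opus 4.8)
The plan is to verify the two assertions by direct matrix multiplication, using the hypothesis $c = u + ta$ at the one place where it matters. First I would check that $T$ is a product of elementary matrices in $E_{1,\fN}$: the factors $\smtx{1}{u^{-1}}{0}{1}$ is of the form $\smtx{1}{x}{0}{1}$ with $x = u^{-1}\in\cO_S\subseteq\cO_S$, which is allowed; the factors $\smtx{1}{0}{u(a-1)}{1}$, $\smtx{1}{0}{t(a-1)}{1}$ and $\smtx{1}{0}{-c}{1}$ have lower-left entries $u(a-1)$, $t(a-1)$ and $-c$, all of which lie in $\fN$ because $\gamma\in\Gamma_1(\fN)$ forces $a\equiv 1$ and $c\equiv 0\pmod\fN$. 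So membership is essentially immediate.

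Next I would compute $T\gamma$. Multiplying the three lower-triangular and one upper-triangular factors of $T$ from the right, one gets a single matrix $T = \smtx{*}{*}{*}{*}$ which I then apply to $\gamma = \smtx{a}{b}{c}{d}$. The key is the bottom row of $T\gamma$. Working out the product $\smtx{1}{0}{t(a-1)}{1}\smtx{1}{0}{-c}{1}\gamma$ first, the bottom-left entry becomes $c + (t(a-1) - c)a = ta - ta^2 + a c - c a \cdot\! $— more carefully, after the two lower-triangular factors the bottom row is $\bigl(c(1 + t(a-1) - c),\ \ast\bigr)$, and then the factor $\smtx{1}{u^{-1}}{0}{1}$ followed by $\smtx{1}{0}{u(a-1)}{1}$ is arranged precisely so that, substituting $c - ta = u$ and hence $c + t(a-1) - c = t(a-1)$... the bottom-left entry collapses to $0$. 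One also checks the bottom-right entry becomes $1$, using $\det(T\gamma) = 1$. Thus $T\gamma = \smtx{1}{x}{0}{1}$ for some $x\in\cO_S$ (the entry $x$ is whatever the computation produces; its exact value is irrelevant for the statement).

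Finally, for \eqref{eq: dec of gamma} I would invert the relation $T\gamma = \smtx{1}{x}{0}{1}$ to get $\gamma = T^{-1}\smtx{1}{x}{0}{1}$, and compute $T^{-1}$ by reversing and inverting the four factors of $T$:
\[
 T^{-1} = \mtx{1}{0}{c}{1}\mtx{1}{0}{-t(a-1)}{1}\mtx{1}{-u^{-1}}{0}{1}\mtx{1}{0}{-u(a-1)}{1}.
\]
Multiplying the first two lower-triangular factors gives $\smtx{1}{0}{c - t(a-1)}{1} = \smtx{1}{0}{c + t(1-a)}{1}$, which accounts for the first factor in \eqref{eq: dec of gamma}; the remaining two factors $\smtx{1}{-u^{-1}}{0}{1}\smtx{1}{0}{-u(a-1)}{1} = \smtx{1}{-u^{-1}}{0}{1}\smtx{1}{0}{u(1-a)}{1}$ give the middle two factors, and then the trailing $\smtx{1}{x}{0}{1}$ is already in place. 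The only mild obstacle is bookkeeping: keeping track of signs and the order of multiplication across four $2\times 2$ factors, and making sure the single substitution $c = u + ta$ is invoked at exactly the right step to kill the bottom-left entry. Everything else is routine.
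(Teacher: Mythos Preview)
Your approach is essentially the same as the paper's: a direct computation of $T\gamma$ followed by inverting $T$ to get the decomposition. The paper phrases it as computing the first \emph{column} of $T\gamma$ to be $\binom{1}{0}$ and then invoking $T\gamma\in\Gamma_1(\fN)$ to force the last column, which is slightly cleaner than your plan of computing the bottom row and then appealing to $\det(T\gamma)=1$ (note that $\det=1$ and bottom-left $=0$ alone do not give bottom-right $=1$ without also knowing top-left $=1$); but this is a cosmetic difference, and your inversion argument for \eqref{eq: dec of gamma} is exactly what the paper leaves implicit.
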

\begin{proof}
A direct computation shows that the first column of $T\gamma$ is 
$
 \left( \begin{smallmatrix}
         1\\ 0
        \end{smallmatrix}
\right).
$
Since $T\gamma$ belongs to $\Gamma_1(\fN)$, we see that $T\gamma=\smtx 1 x 0 1$ for some $x\in \cO_S$.
\end{proof}

Observe that, since $a-1$ and $c$ belong to $\fN$ , identity \eqref{eq: dec of gamma} already expresses $\gamma$ as a
product of elementary matrices of type \eqref{eq: elementary matrices}. The next step is to show that, assuming GRH, one
can reduce to the case where $a$ is congruent to a unit mod $c$ by multiplying $\gamma$ by an elementary matrix. Before
stating the result of \cite{Cooke-Weinberger} that grants this, we recall some terminology related to
ray class groups.

Let $\mathfrak m$ be
an ideal of $\cO$, with factorization into prime ideals of the form
\[
 \fm=\prod_{\fp\mid \fm}\fp^{m(\fp)}.
\]
Let $I^\fm$ be the multiplicative group of ideals of $\cO$ relatively prime to $\mathfrak m$, and let
\[
 F_{1,\fm}=\{x\in F\colon v_\fp(x-1)\geq m(\fp) \ \text{ for all } \fp\mid \fm\}.
\]
There is a natural map $i\colon F_{1,\fm}\ra I^\fm$ given by $x\mapsto (x)$. The quotient
\[
 C_\fm=I^\fm/i(F_{1,\fm})
\]
is the \emph{ray class group modulo $\fm$}. The following result is  \cite[Theorem 2.13]{Cooke-Weinberger}.
\begin{theorem}[Cooke--Weinberger]\label{th: Cooke-Weinberger}
 Assume GRH. Let $\fm$ be an ideal in $\cO$, and let $\alpha$ be an ideal class in $C_\fm$. Then the set of prime ideals
$\fq$ contained in $\alpha$ such that the reduction map
\[
 \cO_S^\times \lra (\cO_S/\fq\cO_S)^\times
\]
 is surjective has positive density.
\end{theorem}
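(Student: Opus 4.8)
The plan is to adapt Hooley's conditional proof of Artin's primitive root conjecture to the number field $F$ and the $S$-unit group, and to graft onto it the ray-class condition by passing to the ray class field.

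\emph{Reformulation.} For a prime $\fq\notin S$ one has $\cO_S/\fq\cO_S=\cO/\fq=\F_\fq$, and $\F_\fq^\times$ is cyclic of order $N\fq-1$; the reduction $\cO_S^\times\to\F_\fq^\times$ is surjective if and only if, for every prime number $\ell$, the image of $\cO_S^\times$ is not contained in the unique index-$\ell$ subgroup $(\F_\fq^\times)^\ell$. The condition ``$\ell\mid N\fq-1$ and every $S$-unit is an $\ell$-th power mod $\fq$'' translates, via Kummer theory, into ``$\fq$ splits completely in $L_\ell:=F(\zeta_\ell,\{u^{1/\ell}:u\in\cO_S^\times\})$'', which is a finite extension ($L_\ell=F(\zeta_\ell,v_1^{1/\ell},\dots,v_r^{1/\ell})$ for a basis $v_1,\dots,v_r$ of $\cO_S^\times$ modulo torsion, $r=\#S-1\ge 1$). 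Thus, away from the finitely many $\fq$ ramified in the fields involved, surjectivity of $\cO_S^\times\to\F_\fq^\times$ is equivalent to: $\fq$ fails to split completely in $L_\ell$ for \emph{every} prime $\ell$. Letting $H_\fm$ denote the ray class field attached to $\fm$, so that $\Gal(H_\fm/F)\cong C_\fm$, the hypothesis $\fq\in\alpha$ becomes the Frobenius condition $\Frob_\fq|_{H_\fm}=\alpha$. One must therefore count primes $\fq$ with $N\fq\le x$, $\Frob_\fq|_{H_\fm}=\alpha$, avoiding complete splitting in all the $L_\ell$, and show this count is $(\delta+o(1))\operatorname{Li}(x)$ with $\delta>0$.

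\emph{Main term (where GRH is used).} Fix a slowly growing cutoff, say $\xi=\tfrac16\log x$. By inclusion--exclusion over squarefree $n$ supported on primes $\le\xi$, the number of such $\fq$ avoiding complete splitting in $L_\ell$ for all $\ell\le\xi$ equals $\sum_n\mu(n)$ times the number of $\fq\in\alpha$ splitting completely in the compositum $L_n:=\prod_{\ell\mid n}L_\ell$. Each term is estimated by the effective Chebotarev density theorem for $L_nH_\fm/F$; GRH enters through the Lagarias--Odlyzko error term $O(x^{1/2}\log(x\,\disc(L_nH_\fm)))$, which is affordable because there are only $2^{\pi(\xi)}=x^{o(1)}$ fields in play, each of degree and discriminant of size $x^{o(1)}$, so the total error is $x^{1/2+o(1)}=o(x/\log x)$. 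The main terms sum to $\delta\cdot\operatorname{Li}(x)$ with $\delta=\sum_{n\ \mathrm{squarefree}}\mu(n)c_\alpha(n)/[L_nH_\fm:F]$, where $c_\alpha(n)\in\{0,1\}$ records whether $\alpha$ restricts trivially to $L_n\cap H_\fm$; the tail of this series beyond $\xi$ is controlled by the degree estimates below.

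\emph{The tail over large $\ell$ (unconditional).} Fix one $S$-unit $u$ of infinite order (available since $r\ge 1$); if $\fq$ splits completely in $L_\ell$ then $N\fq\equiv 1\pmod\ell$ and, writing $m=(N\fq-1)/\ell$, one has $\fq\mid\Nm_{F/\Q}(u^{m}-1)$. Splitting the range of $\ell$ as in Hooley --- roughly $\xi<\ell\le x^{1/2}/(\log x)^2$, then $x^{1/2}/(\log x)^2<\ell\le x^{1/2}\log x$, then $\ell>x^{1/2}\log x$ --- the first range is handled by a Brun--Titchmarsh bound for the degree-$\gg\ell^2$ field $F(\zeta_\ell,u^{1/\ell})$, contributing $\ll x/(\ell^2\log x)$ per $\ell$; the middle range by counting primes $\fq$ with $N\fq-1$ divisible by a rational prime in a short interval; and the last range forces $m<x^{1/2}/\log x$, so $\sum_m\omega(\Nm_{F/\Q}(u^{m}-1))\ll\sum_{m<x^{1/2}/\log x}m$. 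Each of these is $o(x/\log x)$, so combined with the previous step the count of good $\fq\in\alpha$ with $N\fq\le x$ is $(\delta+o(1))\operatorname{Li}(x)$.

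\emph{Positivity of $\delta$: the main obstacle.} It remains to show $\delta>0$, and this is where the standing hypotheses are really used. First, $[L_\ell:F(\zeta_\ell)]\ge\ell$ for every $\ell$, because a basis element of $\cO_S^\times$ is never an $\ell$-th power in $F(\zeta_\ell)^\times$ (a degree comparison, using $\gcd(\ell,\ell-1)=1$ and the saturation of $\cO_S^\times$ in $F^\times$); this is exactly where $\cO_S^\times$ being infinite enters, and together with $[F(\zeta_\ell):F]\gg_F\ell$ it gives $[L_\ell:F]\gg_F\ell^2$, so the series defining $\delta$ converges absolutely. Second, the real place is used to prevent a single ``bad'' condition from swallowing all of $\alpha$: for $\ell>2$ it forces $\zeta_\ell\notin F$, whence $\Gal(L_\ell/F)$ has abelianization exactly $\Gal(F(\zeta_\ell)/F)$, so $L_\ell\cap H_\fm\subseteq F(\zeta_\ell)$ and $[L_\ell H_\fm:H_\fm]\ge[L_\ell:F(\zeta_\ell)]\ge\ell$; and for $\ell=2$ the element $\sqrt{-1}\in L_2$ generates an extension ramified at every real place, whereas $H_\fm$ is unramified there, so $L_2\not\subseteq H_\fm$ and the $\ell=2$ obstruction has density $\le\tfrac12$ within $\alpha$. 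An inclusion--exclusion bookkeeping then shows $\delta$ equals, in the generic (linearly disjoint) situation, the convergent positive Euler product $\frac1{h_\fm}\prod_\ell\bigl(1-\tfrac1{[L_\ell H_\fm:H_\fm]}\bigr)$, with an analogous positive lower bound in general. Proving that this quantity is genuinely positive --- equivalently, that no finite family of complete-splitting conditions exhausts the class $\alpha$ --- is where essentially all the work of the theorem lies; by comparison the density-theorem bookkeeping of the two preceding steps is routine.
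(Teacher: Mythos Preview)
The paper does not prove this statement: it is quoted as \cite[Theorem~2.13]{Cooke-Weinberger} and used as a black box in the proof of Theorem~\ref{theorem: our main theorem}. So there is nothing to compare your argument against here except the original Cooke--Weinberger paper, and what you have written is precisely their method---Hooley's GRH-conditional proof of Artin's primitive-root conjecture, transported to the $S$-unit group of a number field and intersected with a Chebotarev condition in the ray class field $H_\fm$.

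Your reformulation, the GRH-effective-Chebotarev main term, and the three-range Hooley tail are all correct. In the positivity step, the norm-and-$\gcd$ argument for $[L_\ell:F(\zeta_\ell)]\ge\ell$ is fine for odd $\ell$; the use of the real place is exactly right, both to ensure $\zeta_\ell\notin F$ for $\ell\ge 3$ (so that the commutator subgroup of $\Gal(L_\ell/F)$ is the full Kummer part, forcing $L_\ell\cap H_\fm\subseteq F(\zeta_\ell)$) and, for $\ell=2$, to guarantee $F(\sqrt{-1})\not\subseteq H_\fm$ since the modulus $\fm$ carries no archimedean part and hence real places split in $H_\fm$.

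The one place where your sketch stops short---and you flag this yourself---is the passage from ``each $\ell$-obstruction has relative density $<1$ in $\alpha$'' to ``$\delta>0$''. Bounding each factor away from $1$ does not by itself prevent the inclusion--exclusion sum from vanishing, because of possible entanglement among the $L_\ell$. What actually closes the argument in \cite{Cooke-Weinberger} is that for all $\ell$ outside a finite set the fields $L_\ell H_\fm$ are linearly disjoint over $H_\fm$ (ramification at $\ell$ separates them), so $\delta$ factors as a genuinely positive Euler product over large $\ell$ times a finite alternating sum over the small $\ell$, and one checks directly that this finite sum is nonzero using the degree lower bounds you established. You have all the ingredients; only this last assembly is missing.
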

From this we obtain the main result of this section.
\begin{theorem}\label{theorem: our main theorem}
 Let $\gamma=\smtx a b c d$ be an element of $\Gamma_1(\fN)$. Assuming GRH, the following algorithm terminates and 
computes an expression of $\gamma$ as a product of  elementary matrices.
\begin{enumerate}
\item \label{step 1} Iterate over the elements $\lambda\in \cO$ to find $\lambda$ such that $a'=a+\lambda c$
generates a prime ideal and
\begin{equation}\label{eq: units surjective}
 \cO_S^\times\lra \left(\cO_S/a'\cO_S\right)^\times \ \text{ is surjective.}
\end{equation}
\item Set $\gamma'=\smtx 1 \lambda 0 1 \gamma$, and let $\gamma'=\smtx{a'}{b'}{c'}{d'}$.

\item \label{step 3} Iterate over the elements $u\in \cO_S^\times$ until finding $u$ such
that 
\[
 c'\equiv u \pmod{a'}.
\]
\item Use Lemma \ref{lemma: case a equi 1 mod c} to find an expression of $\gamma'$ as a product of elementary
matrices
\end{enumerate}
\end{theorem}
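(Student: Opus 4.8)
The plan is to verify that each step of the algorithm is well-defined and that the loops terminate, then observe that the output is by construction a product of elementary matrices. First I would address Step \ref{step 1}: I must show that a suitable $\lambda\in\cO$ exists. The entries $a$ and $c$ generate the unit ideal in $\cO_S$ (since $\gamma\in\SL_2(\cO_S)$), but more care is needed because we want $a'=a+\lambda c$ to generate a \emph{prime} ideal of $\cO$ lying in a class where the reduction of $\cO_S^\times$ is surjective. The idea is to invoke Theorem \ref{th: Cooke-Weinberger}: choosing an appropriate modulus $\fm$ divisible by $\fN$ (and by the primes in $S$ outside the archimedean ones), the residues $a+\lambda c$ as $\lambda$ ranges over $\cO$ hit every ideal class of $C_\fm$ that is compatible with the congruence conditions forced by $\gamma\in\Gamma_1(\fN)$; since each such class contains a positive density of primes $\fq$ with $\cO_S^\times\to(\cO_S/\fq\cO_S)^\times$ surjective, infinitely many valid $\lambda$ exist, and the naive iteration finds one. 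I would spell out here that $a'$ generating a prime ideal of $\cO$ that is coprime to $S$ means $a'\cO_S$ is a prime (in particular maximal) ideal of $\cO_S$, so condition \eqref{eq: units surjective} makes sense and is exactly the surjectivity guaranteed by Cooke--Weinberger.

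Next I would handle Step \ref{step 3}: since $\gamma'=\smtx 1\lambda01\gamma$ lies in $\Gamma_1(\fN)$ and $a'\cO_S$ is a prime ideal coprime to $\fN$ with $a',c'$ generating the unit ideal, the image of $c'$ in $(\cO_S/a'\cO_S)^\times$ is a well-defined unit, and by \eqref{eq: units surjective} it is hit by some global unit $u\in\cO_S^\times$; iterating over units therefore terminates. This puts $\gamma'$ into the form required by the hypothesis of Lemma \ref{lemma: case a equi 1 mod c} with $c'=u+ta'$ for some $t\in\cO_S$, so Step 4 applies and yields $\gamma'$ (hence $\gamma=\smtx 1{-\lambda}01\gamma'$) as an explicit product of elementary matrices of type \eqref{eq: elementary matrices}, using the remark following the lemma that $a'-1,c'\in\fN$ guarantees the factors land in the correct set.

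The main obstacle I anticipate is making the appeal to Cooke--Weinberger in Step \ref{step 1} fully rigorous: one must choose the modulus $\fm$ so that the hypotheses of Lemma \ref{lemma: case a equi 1 mod c} (which live in $\cO_S$, involving $\fN$) are met by the prime $\fq=a'\cO$, while simultaneously ensuring that the set $\{a+\lambda c : \lambda\in\cO\}$ genuinely surjects onto the relevant piece of $C_\fm$. The key point is that $a$ and $c$ are coprime in $\cO_S$, but possibly not in $\cO$; one resolves this by clearing denominators — replacing $\gamma$ by a suitable $S$-unit multiple or working with the ideal generated by $a$ and $c$ away from $S$ — and then using that for fixed coprime $a,c$ the arithmetic progression $a+\lambda c$ represents, via the map $\cO\to\cO/c\cO$ and class field theory, all classes in $C_\fm$ subject to a fixed congruence at $\fN\mid\fm$. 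Everything else (termination of the two \texttt{while} loops given existence, and correctness of the final assembly) is then routine, reducing to the already-established Lemma \ref{lemma: case a equi 1 mod c}.
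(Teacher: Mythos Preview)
Your approach is essentially the paper's: clear denominators, invoke Cooke--Weinberger to produce the $\lambda$ in Step~\ref{step 1}, then read off the unit in Step~\ref{step 3} from the surjectivity. The only real difference is your choice of modulus. You propose taking $\fm$ divisible by $\fN$ and by the finite primes in $S$, and then worry about showing that $\{a+\lambda c:\lambda\in\cO\}$ surjects onto the compatible part of $C_\fm$. The paper avoids all of this by taking simply $\fm=(ec)$, where $e\in\cO_S^\times$ is any unit making $ea,ec\in\cO$: one applies Theorem~\ref{th: Cooke-Weinberger} to the single ray class $[(ea)]\in C_{(ec)}$, and since $(ea)$ is integral and principal the resulting prime $\fq$ is automatically principal with a generator $q\equiv ea\pmod{ec}$, i.e.\ $q=e(a+\lambda c)$ for some $\lambda\in\cO$; then $a'\cO_S=q\cO_S$ and the surjectivity transfers directly.

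In particular, there is no need to build $\fN$ into $\fm$: the congruences $a'\equiv 1$ and $c'\equiv 0\pmod{\fN}$ needed for Lemma~\ref{lemma: case a equi 1 mod c} to land in $\Gamma_1(\fN)$ are automatic, since $\gamma'=\smtx1\lambda01\gamma$ is a product of two elements of $\Gamma_1(\fN)$. Your anticipated obstacle therefore dissolves with the simpler modulus, and the rest of your outline (termination of Step~\ref{step 3}, application of the lemma) matches the paper exactly.
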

\begin{proof} We can, and do, choose  $e\in \cO_S^\times$ such that   $ea$ and $ec$ belong to $\cO$. By applying Theorem
\ref{th: Cooke-Weinberger} with $\fm=(ec)$, we see that there exists a prime ideal $\fq$ in the
same class of $[(ea)]\in C_{\fm}$ such that $\cO_S^\times \ra \left(\cO/\fa'\cO_S \right)^\times$ is surjective. Since
$(ea)$ is integral and principal, we see that $\fq$ is also principal. Therefore, $\fq=(q)$ for some $q\equiv ea
\pmod{(ec)}$,
so that $q=ea+\lambda ec$ for some $\lambda\in \cO$. But $q$ and $a'=a+\lambda c$ generate the same ideal in $\cO_S$, so
that $\cO_S^\times\ra \left(\cO/a'\cO_S \right)^\times$ is surjective. This justifies that step \eqref{step
1} of the algorithm can be accomplished.

Now  the class of $c'$ in $\left(\cO_S/a'\right)^\times$ can be represented by some unit $u\in\cO_S^\times$, and this
justifies step $\eqref{step 3}$. 
\end{proof}

In the particular case $\fN=\cO_S$, the result in~\cite[Th. 2.14]{Cooke-Weinberger} asserts that any matrix $\gamma\in
\SL_2(\cO_S)$ can be expressed as a product of at most $7$
elementary matrices.   Theorem \ref{theorem: our main theorem} above and expression \eqref{eq: dec of gamma}
give the  following generalization to arbitrary ideals $\fN$, which also slightly improves the number of
elementary matrices needed to $5$.

\begin{corollary}
\label{corollary:cw}
Under the assumption of GRH, every matrix in $\Gamma_1(\fN)$ is a product of at most $5$ elementary
matrices of type \eqref{eq: elementary matrices}.
\end{corollary}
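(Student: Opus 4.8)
The plan is to read the bound off directly from the algorithm of Theorem~\ref{theorem: our main theorem} together with the factorization \eqref{eq: dec of gamma} into four elementary matrices. Fix $\gamma=\smtx{a}{b}{c}{d}\in\Gamma_1(\fN)$. First I would dispose of two degenerate situations. If $\fN=(0)$, then $\Gamma_1(\fN)$ consists of the matrices $\smtx{1}{*}{0}{1}$, each already a single elementary matrix. If $c=0$, then $\gamma=\smtx{a}{b}{0}{a^{-1}}$ with $a\equiv 1\pmod\fN$, and one may write $c=0=u+ta$ by taking $u$ to be any unit and $t=-ua^{-1}\in\cO_S$; then Lemma~\ref{lemma: case a equi 1 mod c} applies to $\gamma$ itself and \eqref{eq: dec of gamma} presents $\gamma$ as a product of four elementary matrices (here $c+t(1-a)=u(1-a^{-1})$ and $u(1-a)$ both lie in $\fN$ because $a-1\in\fN$).

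Assume now $c\neq 0$. Running the algorithm of Theorem~\ref{theorem: our main theorem} produces $\lambda\in\cO$ for which, setting $\gamma'=\smtx{1}{\lambda}{0}{1}\gamma=\smtx{a'}{b'}{c'}{d'}\in\Gamma_1(\fN)$, there is a unit $u\in\cO_S^\times$ with $c'\equiv u\pmod{a'}$, i.e.\ $c'=u+ta'$ for some $t\in\cO_S$. Applying Lemma~\ref{lemma: case a equi 1 mod c} to $\gamma'$, identity \eqref{eq: dec of gamma} reads
\[
\gamma'=\mtx{1}{0}{c'+t(1-a')}{1}\mtx{1}{-u^{-1}}{0}{1}\mtx{1}{0}{u(1-a')}{1}\mtx{1}{x}{0}{1},
\]
a product of exactly four elementary matrices of type \eqref{eq: elementary matrices}: the two factors $\smtx{1}{0}{*}{1}$ have lower-left entries $c'+t(1-a')$ and $u(1-a')$, both in $\fN$ since $c'\in\fN$ and $a'-1\in\fN$, while the two factors $\smtx{1}{*}{0}{1}$ have off-diagonal entries $-u^{-1},x\in\cO_S$. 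Since $\lambda\in\cO\subseteq\cO_S$, the matrix $\smtx{1}{-\lambda}{0}{1}$ is itself of type \eqref{eq: elementary matrices}, so $\gamma=\smtx{1}{-\lambda}{0}{1}\,\gamma'$ is a product of $1+4=5$ such matrices, as claimed.

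The genuine content lies entirely in Theorem~\ref{theorem: our main theorem} (hence in the Cooke--Weinberger density statement and GRH); the corollary is a bookkeeping consequence. The only points needing care are the counting and the type-checking: one must note that the leading factor $\smtx{1}{-\lambda}{0}{1}$ and the first factor $\smtx{1}{0}{c'+t(1-a')}{1}$ of \eqref{eq: dec of gamma} are of opposite triangular type and so cannot be merged into one elementary matrix, which is why the bound comes out as $5$ and not $4$; and one must confirm that every lower-left entry appearing in \eqref{eq: dec of gamma} lands in $\fN$, not merely in $\cO_S$, which is immediate from $\gamma'\in\Gamma_1(\fN)$. Neither step presents a real obstacle.
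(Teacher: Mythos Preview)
Your argument is correct and is precisely the approach the paper intends: the corollary is stated without a separate proof, the text simply remarking that it follows from Theorem~\ref{theorem: our main theorem} together with the four-factor expression~\eqref{eq: dec of gamma}. Your write-up makes this explicit and, in fact, is slightly more careful than the paper in that you treat the degenerate cases $\fN=(0)$ and $c=0$ separately (the latter is needed, since step~\eqref{step 1} of the algorithm cannot be run as written when $c=0$).
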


\section{Computation of $p$-adic Darmon points on curves with composite conductor}\label{section: p-adic}
In this section we explain the algorithm for computing $p$-adic Darmon points in curves of composite conductor and some
of the related computational issues. After briefly reviewing the definition of $p$-adic Darmon points in
\S\ref{subsection: p-adic darmon points},  in \S\ref{subsection: computation of semi-indefinite integrals} we describe
the algorithm to transform the semi-indefinite integrals appearing in the definition of these points into
double multiplicative
 integrals. In \S\ref{subsection: computation of definite double integrals} we explain an efficient way of
computing these double integrals, and finally in \S\ref{subsection: numerical computations} we comment on the
calculations that have been carried out in support of Darmon's conjecture.
\subsection{Review of $p$-adic Darmon points}\label{subsection: p-adic darmon points} Our presentation of the necessary
background and the definition of $p$-adic Stark--Heegner points follows closely  \cite[\S1]{darmon-pollack}, to
which we refer the
reader for more details and, in fact, for an excellent account of this material in the prime level case.

Let $E$ be an elliptic curve over $\Q$ of conductor $N=pM$, with $p$ a prime not dividing $M$. Let $f(z)=\sum_{n\geq 1}
a_ne^{2\pi i nz}$ be the weight two newform on $\Gamma_0(N)$ whose $L$-series coincides with that of $E$. The
coefficient $a_p$ is $1$ (resp. $-1$) if $E$ has split (resp. non-split) multiplicative reduction at
$p$.

Let $R$ be the order in $\M_2(\Z[1/p])$ consisting of matrices that are upper triangular modulo $M$, and let
$\Gamma=R^\times_1$ denote its group of units of determinant $1$. Let $K$ be a real quadratic field in which $p$ is
inert and all primes dividing $M$ are split. Set  $\cH_p=\P^1(K_p)\setminus \P^1(\Q_p)$, in which $\Gamma$ acts by
M\"obius transformations. The $p$-adic Darmon point construction yields a map
\[
 \begin{array}{ccc}
  \Gamma\setminus \cH_p&\lra& E(K_p)\\
\tau & \longmapsto & P_\tau,
 \end{array}
\]
given in terms of certain $p$-adic integrals, and whose definition ultimately relies on the  $\Z$-modular symbol
attached to $E$.

\subsubsection{Measures attached to modular symbols} If $V$ is a $\Z$-module, a \emph{$V$-valued modular symbol}
$\varphi$ is a map
\[\varphi\colon
\P^1(\Q)\times\P^1(\Q)\lra V,\quad (r,s)\mapsto \varphi\{r\ra s\}
\]
such that 
\[
 \varphi\{r\ra s\}+\varphi\{s\ra t\}=\varphi\{r\ra t\} \ \ \text{for all } r,s,t\in\P^1(\Q).
\]

For $w_\infty\in \{\pm 1\}$ we denote by $I_f\colon\P^1(\Q)\times \P^1(\Q)\ra \Z$
the $\Z$-valued modular symbol attached to $E$ and $w_\infty$. That is to say
\begin{equation*}
I_f\{r\ra s\}= 
\begin{cases}  \frac{1}{\Omega^+}\int_r^s \operatorname{Re} \omega_f & \text{if $w_\infty=+1$,}
\\
  \frac{1}{\Omega^-}\int_r^s \operatorname{Im} \omega_f & \text{if $w_\infty=-1$,}
\end{cases}
\end{equation*}
where $\omega_f=2\pi i f(z)dz$ and  $\Omega^+,\Omega^-\in \R^{>0}$ are the  unique periods with the property that the
map $I_f$ defined in this way takes values in $\Z$ and in no proper ideal of $\Z$. To simplify the exposition we assume
for the rest of the article that $\omega_\infty=1$, but the construction works very similarly for $\omega_\infty=-1$.

A \emph{$\Z$-valued measure on $\P^1(\Q_p)$} is a finitely additive function $\mu$ from the set of compact open subsets
of $\P^1(\Q_p)$ to $\Z$. In \cite{darmon-integration} Darmon attaches to each pair $r,s\in\P^1(\Q)$  a $\Z$-valued
measure $\mu_f\{r\ra s\}$ on $\P^1(\Q_p)$ with total measure $0$ by defining
\[
 \mu_f\{r\ra s\}(\gamma\Z_p):=I_f\{\gamma^{-1}r\ra \gamma^{-1}s\}.
\]
This is enough to define $\mu_f\{r\ra s\}$ for all compact open  $U\subset\P_1(\Q_p)$, because 
either $U$ or $\P^1(\Q_p)\setminus U$  are of the form $\gamma\Z_p$ for some $\gamma\in
\Gamma$.

\subsubsection{Double multiplicative integrals} If $h$ is a continuous function on $\P_1(\Q_p)$ and
$\mu$ is a measure on $\P^1(\Q_p)$ then the integral $\int_{\P^1(\Q_p)}h(x)d\mu(x)$ is defined by means of the Riemann
sum
\[
 \int_{\P^1(\Q_p)}h(x)d\mu(x)=\lim_{\mathcal U=\{U_\alpha\}}\sum_\alpha h(x_\alpha){\mu}(U_\alpha),
\]
where the limit is taken over increasingly finer coverings $\mathcal U$ of $\P^1(\Q_p)$ by compact open
subsets $U_\alpha$, and $x_\alpha$ is any point in $U_\alpha$. If $\mu$ is $\Z$-valued the \emph{multiplicative 
 integral} can be defined by replacing the Riemann sum by a Riemann product:
\[
 \Xint\times_{\P^1(\Q_p)}h(x)d\mu(x)=\lim_{\mathcal U=\{U_\alpha\}}\prod_\alpha h(x_\alpha)^{\mu(U_\alpha)}.
\]

For $\tau_1,\tau_2\in \cH_p$ and
$r,s\in\P^1(\Q_p)$ Darmon defines a $K_p$-valued \emph{double integral} as
\[
 \int_{\tau_1}^{\tau_2}\int_r^s\omega_f=\int_{\P^1(\Q_p)}\log \left( \frac{x-\tau_2}{x-\tau_1}\right)d\mu_f\{r\ra
s\}(x),
\]
where $\log$ denotes a fixed branch of the $p$-adic logarithm. Since $\mu_f\{r\ra s\}$ takes values in $\Z$ a
$K_p^\times$-valued \emph{double multiplicative integral} can be defined as
 \[
 \Xint\times_{\tau_1}^{\tau_2}\int_r^s\omega_f=\Xint\times_{\P^1(\Q_p)}\left( 
\frac{x-\tau_2}{x-\tau_1}\right)d\mu_f\{r\ra s\}(x).
\]
 These two integrals are related by the formula
\[
 \int_{\tau_1}^{\tau_2}\int_r^s\omega_f=\log\left( \Xint\times_{\tau_1}^{\tau_2}\int_r^s\omega_f  \right).
\]

\subsubsection{Semi-indefinite integrals}\label{subsubsec: semi-definite integrals} The  double multiplicative integral
satisfies the usual additivity properties
with respect to the limits, as well as the  $\Gamma$-invariance property
\[
 \Xint\times_{\gamma\tau_1}^{\gamma\tau_2}\int_{\gamma r}^{\gamma s}\omega_f=\Xint\times_{\tau_1}^{\tau_2}\int_{
r}^{ s}\omega_f\ \  \ \text{for all } \gamma\in \Gamma.
\]
Therefore, it gives rise to a group homomorphism
\[
 \operatorname{Int^\times}\colon
\left(\operatorname{Div}^0(\cH_p)\otimes\operatorname{Div}^0(\P^1(\Q))\right)_\Gamma\lra K_p^\times,
\]
where the subscript $\Gamma$ denotes the subgroup of $\Gamma$-coinvariants. Let $\Z[\Gamma]$ denote the group ring of
$\Gamma$ and let $I_\Gamma$ denote the augmentation ideal, defined by the
exact sequence
\[
 0\lra I_\Gamma\lra \Z[\Gamma]\lra \Z\lra 0.
\]
Tensoring with $I_\Gamma$ over $\Z$ and taking $\Gamma$-coinvariants gives
\[
 0\lra K_\Gamma\lra (I_\Gamma\otimes I_\Gamma)_\Gamma\stackrel{r}{ \lra} (\Z[\Gamma]\otimes I_\Gamma)_\Gamma\lra
(I_\Gamma)_\Gamma\lra 0.
\]
Since $I_\Gamma$ is generated over $\Z$ by elements of the form $\gamma-1$,  choosing base points $\tau\in\cH_p$ and
$x\in\P^1(\Q)$ one can define integration maps
\[
 \operatorname{Int}_{\tau,x}^\times\colon (I_\Gamma\otimes I_\Gamma)_\Gamma\lra K_p^\times
\]
determined by
\[
 \operatorname{Int_{\tau,x}^\times}((\gamma_0-1)\otimes(\gamma_1-1))=\int_\tau^{\gamma_0\tau}\int_x^{\gamma_1
x}\omega_f, \ \ \ \text{ for } \gamma_0,\gamma_1\in\Gamma.
\]
Letting $\Lambda=I_{\tau,x}(K_\Gamma)\subset K_p^\times$ yields a well defined map
\begin{equation}\label{eq: integration on im r}
 \operatorname{Int}_{\tau,x}^\times\colon \operatorname{Im}(r)\lra K_p^\times/\Lambda.
\end{equation}
The group $(I_\Gamma)_\Gamma\simeq \Gamma_{ab}$ is finite, say of exponent $e_\Gamma$. If  $y=\gamma x\in\P^1(\Q)$ is in
the same
$\Gamma$-orbit as $x$, define 
\[
 \Xint\times^{\tau}\int_x^y e_\Gamma\omega_f:=\operatorname{Int}_{\tau,x}^\times(e_\Gamma\cdot 1\otimes
(\gamma-1))\in K_p^\times/\Lambda.
\]
This \emph{semi-indefinite integral} satisfies the following properties:
\begin{enumerate}
 \item\label{property 1} $\displaystyle\Xint\times^\tau\int_r^s e_\Gamma\omega_f \times \Xint\times^\tau\int_s^t 
e_\Gamma \omega_f=
\Xint\times^\tau\int_r^t  e_\Gamma\omega_f,\ \ \text{for all } \tau\in \cH_p,\ r,s,t\in \Gamma x$; 
 \item \label{property 2} $\displaystyle\Xint\times^{\tau_2}\int_r^s  e_\Gamma\omega_f \div
\Xint\times^{\tau_1}\int_r^s e_\Gamma\omega_f=
\Xint\times_{\tau_1}^{\tau_2}\int_r^s
 e_\Gamma\omega_f,  \text{ for all } \tau_1,\tau_2\in \cH_p,\ r,s\in \Gamma x$;
\item \label{property 3}$\displaystyle\Xint\times^{\gamma\tau}\int_{\gamma r}^{\gamma s}  e_\Gamma\omega_f
=\displaystyle\Xint\times^{\tau}\int_{r}^{ s} e_\Gamma \omega_f$, for all $\gamma\in\Gamma$.
\end{enumerate}

\subsubsection{Darmon points} Let $q$ denote the $p$-adic period of $E$ and let $\Phi_{\rm{Tate}}\colon
K_p^\times/q^\Z\ra E(K_p)$ be Tate's uniformization map.

Given $\tau\in \cH_p$, let $\cO_\tau$ be the ring of matrices in $R$ that have the vector $(\tau,1)$ as
eigenvector, which is isomorphic to a $\Z[1/p]$ order of $K$. Let $H_\tau$ denote its
ring class
field which we can, and do, view as a subfield of $K_p$ by choosing a prime of $H_\tau$ above $p$. The stabilizer
$\Gamma_\tau$ of $\tau$ in
$\Gamma$ is a cyclic group of infinite order isomorphic to $\cO_{\tau,1}^\times/\langle \pm 1\rangle $. Let
$\gamma_\tau$ be a generator of $\Gamma_\tau$, and define
\[
 J_\tau=\Xint\times^\tau\int_\infty^{\gamma_\tau\infty}e_\Gamma\omega_f.
\]
\begin{conjecture}[Darmon]
\label{conjecture:darmon}
The local point $P_\tau=\Phi_{\rm{Tate}}(J_\tau)$ belongs to $E(H_\tau)$. 
\end{conjecture}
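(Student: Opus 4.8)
The statement in question is \emph{Darmon's conjecture}, which at present is open in the generality asserted; the purpose of this section is to provide numerical corroboration, and what follows is only a sketch of the shape a proof is expected to take, along the lines of the program of Bertolini--Darmon and its refinements. The plan is to identify the local invariant $J_\tau \in K_p^\times/\Lambda$ with the formal-group data of a genuine algebraic point by realizing $J_\tau$ as the exponential of a derivative of a $p$-adic $L$-function at an exceptional zero. First I would use that $a_p(E)=\pm 1$, so that $p$ is an exceptional prime for $E$ and $f$ lies in a Hida family $f_\infty$ whose weight-two arithmetic specialization is $f$ itself; the measure-valued modular symbol $\mu_f$ underlying the definition of $J_\tau$ then deforms to the universal measure attached to $f_\infty$. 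Using that $p$ is inert in $K$ and that the primes dividing $M$ split in $K$, one attaches to the $\Gamma$-conjugacy class of $\tau$ an optimal embedding of the relevant $\Z[1/p]$-order of $K$ into $R$, hence an anticyclotomic family of characters, and a ``first reciprocity law'' identifies $J_\tau$ with the value at $f$ of the derivative in the weight direction of a two-variable $p$-adic $L$-function attached to $f_\infty$ and $K$.

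Second, on the arithmetic side I would invoke an exceptional-zero $p$-adic Gross--Zagier formula to express that same derivative through $\log_E$ of a global point: interpolating the Heegner points attached to the weight-two specializations of $f_\infty$ (a ``second reciprocity law''), the $L$-derivative computes, up to explicit periods, the formal-group logarithm of a point $P^{\mathrm{glob}}$ defined over the ring class field $H_\tau$ via a modular (possibly Shimura-curve) parametrization of $E$. Combining the two reciprocity laws gives $\log_E \Phi_{\mathrm{Tate}}(J_\tau) = \log_E P^{\mathrm{glob}}$ in $E(K_p)$, and since the kernel $\Lambda$ is commensurable with Tate's lattice $q^{\Z}$, one concludes $P_\tau = P^{\mathrm{glob}} \in E(H_\tau)$; the precise field of definition and the matching of the $\Gal(H_\tau/K)$-action fall out of the optimal-embedding bookkeeping.

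The hard part — and the reason the statement remains a conjecture — is this second step: a full exceptional-zero $p$-adic Gross--Zagier formula with period normalizations sharp enough to pin down $P^{\mathrm{glob}}$, valid for composite conductor $M>1$ and real quadratic $K$, is not available; the existing results establish only weaker ``globality'' assertions or require restrictions on $M$ or on auxiliary non-vanishing hypotheses. Moreover, the composite-conductor case forces one to work with an explicit quaternionic parametrization and to track optimal embeddings and the genus structure of $H_\tau$; making this effective is precisely what the elementary-matrix machinery of Section~\ref{section: preliminaries} and the mild hypothesis of Assumption~\ref{assumption} accomplish, but these serve the \emph{computation} of $J_\tau$, not its identification with $P^{\mathrm{glob}}$. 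Pending the analytic input above, the contribution here is therefore to verify the conjecture numerically in new cases with $M>1$, which is carried out in \S\ref{subsection: numerical computations}.
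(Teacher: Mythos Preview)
You have correctly identified that the statement is a \emph{conjecture} which the paper does not prove; the paper offers no argument toward its truth beyond the numerical computations of \S\ref{subsection: numerical computations}, and your final paragraph accurately describes the paper's actual contribution. Your first two paragraphs, sketching a Bertolini--Darmon style program via Hida families and a hoped-for exceptional-zero $p$-adic Gross--Zagier formula, are reasonable background on the expected shape of an eventual proof, but none of that material appears in the paper, which simply states the conjecture and moves on to the algorithmic and experimental content.
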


\subsection{Computation of semi-indefinite integrals}\label{subsection: computation of semi-indefinite integrals}
In order to effectively compute the points $J_\tau$ one needs to compute the semi-indefinite integrals
\begin{equation}\label{eq: semi-indefinite integral}
 \Xint\times^\tau\int_\infty^{\gamma_\tau\infty}e_\Gamma\omega_f.
\end{equation}
The method used in \cite{darmon-green} and \cite{darmon-pollack} boils down to using properties \eqref{property 1},
\eqref{property 2} and \eqref{property 3} of semi-indefinite integrals to express them in terms of double integrals,
which can be effectively computed either  via Riemann products as in \cite{darmon-green} or, more efficiently, via
overconvergent modular symbols as in \cite{darmon-pollack}. 

The algorithm for expressing semi-indefinite integrals in terms of double integrals of \cite{darmon-green} and
\cite{darmon-pollack} is based on the continued fraction algorithm, and it only works under the assumption that $M=1$
(i.e., that $E$ has  conductor equal to $p$). In this section we introduce an algorithm that, assuming GRH and under
some mild assumptions on $E$, works for all levels $M$. In Section \ref{subsection: computation of definite double
integrals} we will see that the resulting definite double integrals obtained by this method can also be computed using
overconvergent modular symbols, by suitably adapting the techniques of \cite{darmon-pollack}.

From now on we assume that the modular form $f$ satisfies the following condition, which is non-vacuous only when $M$ is prime.
\begin{assumption}\label{assumption}
 There is a $d>1$, $d\mid M$ such that  $f$ has eigenvalue $1$ with respect to the Atkin-Lehner
operator $W_d$. 
\end{assumption}

\begin{remark}\label{rmk: ass} Under Assumption \ref{assumption}, the double multiplicative integral is also invariant
under the matrix
$w_d=\smtx{0}{1}{-d}{0}$:
\begin{equation*}
 \Xint\times_{w_d\tau_1}^{w_d\tau_2}\int_{w_dr}^{w_ds}e_\Gamma\omega_f=\Xint\times_{\tau_1}^{\tau_2}\int_{r}^{s}
e_\Gamma\omega_f.
\end{equation*}
Let $\tilde \Gamma$ be the subgroup of $\PGL_2(\Q)$ generated by  $\Gamma$ and $w_d$. Then, by replacing $\Gamma$ by
$\tilde\Gamma$ in the argument of Section \ref{subsection: computation of definite double integrals} one can extend
the definition of the semi-indefinite integrals $\Xint\times^\tau\int_r^se_\Gamma\omega_f$  to all pairs
$r,s\in\P^1(\Q)$ lying in the
same $\tilde \Gamma$-orbit.
\end{remark}

Let $\Gamma_1$ be the congruence subgroup of $\SL_2(\Z[1/p])$ defined as
\[
 \Gamma_1=\{\gamma\in\Gamma\colon \gamma \equiv \smtx 1 * 0 1 \pmod M\}\subset \Gamma.
\]
Using the properties of the multiplicative integral it is easy to see that
\[
 (J_\tau)^m=\Xint\times^\tau\int_{\infty}^{\gamma_\tau^m\infty}\omega_f.
\]
Therefore, replacing  $P_\tau$ by a multiple of it if necessary we can always assume that  $\gamma_\tau$ belongs to
$\Gamma_1$ (but see also Remark \ref{rmk}).

Observe that $\Gamma_1$  is one of the groups treated in Section \ref{section: preliminaries}.
Indeed, with the
notation as in that section, if we let $F=\Q$, $S=\{\infty,p\}$ and $\fN=M\cdot \Z[1/p]$ we
have that $\Gamma_1=\Gamma_1(\fN)$. In particular, the algorithm described in Theorem \ref{theorem: our main theorem}
gives an effective method (under our running assumption of GRH) for computing a decomposition of $\gamma_\tau$ of the
form
\begin{equation}\label{eq: decomposition of gamma}
 \gamma_\tau=U_1 L_1 U_2L_2 U_3,
\end{equation}
where the  matrices $U_i$ and $L_i$ are of the form
\[
 U_i=\mtx{1}{x_i}{0}{1}\  \text{for some } x_i\in\Z[1/p],\ \
  L_i=\mtx{1}{0}{y_i}{1} \text{ for some }
y_i\in M\cdot \Z[1/p].
\]
In particular, $L_i$ and $ U_i$ belong to $\Gamma$. Then, for $G\in \Gamma$ we have that 
\begin{equation}\label{eq: upper triangular}
 \Xint\times^\tau\int_\infty^{U_i
G\infty}e_\Gamma\omega_f=\Xint\times^{U_i^{-1}\tau}\int_\infty^{G\infty}e_\Gamma\omega_f
\end{equation}
and
\begin{equation}\label{eq: lower triangular}
\begin{split}
  \Xint\times^\tau\int_\infty^{L_i
G\infty}e_\Gamma\omega_f
&=\Xint\times^\tau\int_\infty^0e_\Gamma\omega_f\times\Xint\times^\tau\int_0^{L_iG\cdot\infty}
e_\Gamma\omega_f\\&=\Xint\times^\tau\int_\infty^0e_\Gamma\omega_f \times
\Xint\times^{L_i^{-1}\cdot \tau}\int_0^{G\cdot \infty}\omega_f^+\\&=\Xint\times^{
\tau}\int_\infty^{0}e_\Gamma\omega_f\times\Xint\times^{L_i^{-1}\cdot
\tau}\int_{0}^{\infty}e_\Gamma\omega_f\times\Xint\times^{L_i^{-1}\cdot
\tau}\int_{\infty}^{G\cdot \infty}e_\Gamma\omega_f\\&=\Xint\times_\tau^{L_i^{-1}\cdot
\tau}\int_{0}^{\infty}e_\Gamma\omega_f\times\Xint\times^{L_i^{-1}\cdot
\tau}\int_{\infty}^{G\cdot \infty}e_\Gamma\omega_f.
\end{split}
\end{equation}
In view of decomposition \eqref{eq: decomposition of gamma},  repeated application of  \eqref{eq: upper triangular} and
\eqref{eq: lower triangular} transforms the semi-indefinite integral \eqref{eq: semi-indefinite integral} into a
product of double multiplicative integrals. Observe that Assumption \ref{assumption} is crucial in \eqref{eq: lower
triangular} because of the integral $\Xint\times^\tau\int_\infty^0 e_\Gamma\omega_f$. Indeed, the cusps $0$ and
$\infty$ are never in the same $\Gamma$-orbit if $M>1$, but $w_d\infty=0$ so they are in the same
$\tilde\Gamma$-orbit and thanks to Remark \ref{rmk: ass} the integral $\Xint\times^\tau\int_\infty^0e_\Gamma\omega_f$
is well defined.

\begin{remark}\label{rmk}
 As we already mentioned, one can overcome  the fact that   $\gamma_\tau=\smtx a b c d$  does not generally
belong to $\Gamma_1$ by computing an appropriate power $(J_\tau)^m$. In some cases one can apply an
alternative procedure instead, which turns out to be more convenient in the actual computations and it allows for the
computation of $J_\tau$ itself. Namely, if $a\equiv p^n \pmod M$ for some integer $n$, then the
matrix $g=\mtx{p^{-n}}{0}{0}{p^n}$ belongs to $\Gamma$ and
\[
 J_\tau=\Xint\times^\tau\int_\infty^{\gamma_\tau\infty}e_\Gamma\omega_f=\Xint\times^{g\tau}\int_\infty^{
g\gamma_\tau\infty } e_\Gamma\omega_f
\]
with $g\gamma_\tau$ belonging to $\Gamma_1$.

\end{remark}

\subsection{Computation of the definite double integrals}\label{subsection: computation of definite double integrals}

As we have seen in Subsection~\ref{subsection: computation of semi-indefinite integrals} the  computation of
the period $J_\tau$ is reduced to products of integrals of the form
\begin{equation}
\label{eq:multintegral-compute}
\Xint\times_{\PP^1(\QQ_p)}\left(\frac{x-\tau_2}{x-\tau_1}\right)d\mu_f\{r\to s\}(x).
\end{equation}
Write $\mu$ for the measure $\mu_f\{r\to s\}$, and consider a decomposition of $\PP^1(\QQ_p)$ into a disjoint union of $L$
open balls of the form
\begin{equation}
\label{eq:decomp-P1}
\PP^1(\QQ_p)=\bigcup_{i=1}^L g_i\cdot\ZZ_p,\quad g_i=\mtx{a_i}{b_i}{c_i}{d_i}\in\GL_2(\Q),
\end{equation}
which will be fixed later. This yields in turn a decomposition
\[
\Xint\times_{\PP^1(\QQ_p)}\left(\frac{x-\tau_2}{x-\tau_1}\right)d\mu(x)=\prod_{i=1}^L \Xint\times_{g_i\ZZ_p}
\left(\frac{x-\tau_2}{x-\tau_1}\right)d\mu(x).
\]
Fix such an $i$ and let $g = g_i$ be the corresponding matrix, written $g=\smtx abcd$. We are thus reduced to calculating
\[
\Xint\times_{g\ZZ_p} \left(\frac{x-\tau_2}{x-\tau_1}\right)d\mu(x).
\]
Apply the change of variables $x=g\cdot t$ to get
\[
\Xint\times_{g\ZZ_p} \left(\frac{x-\tau_2}{x-\tau_1}\right)d\mu(x) = \Xint\times_{\ZZ_p} \left(\frac{g\cdot t-\tau_2}{g\cdot t-\tau_1}\right)d\mu(g\cdot t).
\]
Let $\log_p$ be the unique homomorphism $K_p^\times\to K_p$ such that $\log_p(1-t)=-\sum_{n=1}^\infty t^n/n$ and 
$\log_p(p)=0$. It is surjective, with kernel
\[
\ker \left(\log_p\colon K_p^\times\to K_p \right)= p^\ZZ\times \mathbf{U},
\]
where $\mathbf{U}$ is the group of roots of unity in $K_p^\times$. Suppose that we can express the integrand as a power
series in $t$ of the form
\begin{equation}
\label{eq:pseriesexpansion}
\left(\frac{g\cdot t-\tau_2}{g\cdot t-\tau_1}\right) = \alpha_0\left(1+\sum_{n=1}^\infty \alpha_np^n t^n\right),
\end{equation}
with $\alpha_n$ belonging to $\ZZ_p$ for all $n\geq 1$. Then the expression in~\eqref{eq:pseriesexpansion} converges
for $t\in\ZZ_p$ and is
constant modulo $p^{v_p(\alpha_0)+1}$. Therefore the expression of~\eqref{eq:multintegral-compute} can be determined
modulo this power of $p$ by performing $L$ evaluations. The
logarithm $\log_p(J_\tau)$ of the  period is evaluated by noting that
\[
\log_p\left(\alpha_0(1+\sum \alpha_np^nt^n)\right) = \log_p\alpha_0 - \sum_{n=1}^\infty \frac{\alpha_n(-p)^nt^n}{n}.
\]
Interchanging the infinite sum with the integral, finding $\log_p(J_\tau)$ boils down to calculating
\[
\int_{\ZZ_p} t^n d\mu(g\cdot t) = \int_{g\ZZ_p} (g^{-1}\cdot t)^nd\mu(t),
\]
which is the $n$th moment of $\mu$ at $g\ZZ_p$. This data can be efficiently computed in time polynomial in the number of $p$-adic digits of precision, thanks to the methods of Darmon and Pollack (see for instance~\cite[display 23]{darmon-pollack}). Finally, one recovers the original period via the formula
\[
J_\tau = p^{v_p(J_\tau)}\cdot \zeta\cdot \exp_p(\log_p J_\tau),
\]
where $\zeta$ is the Teichmuller lift of the unit part modulo $p$ of the multiplicative integral. Note also that
$v_p(J_\tau)$ is the sum of the valuations of the $\alpha_0$ appearing in the decomposition~\eqref{eq:decomp-P1}. In 
order to find the power series in~\eqref{eq:pseriesexpansion}, we calculate:
\begin{align*}
\frac{gt-\tau_2}{gt-\tau_1} &= \frac{\frac{at+b}{ct+d} -\tau_2}{\frac{at+b}{ct+d}-\tau_1}\\
&= \frac{(a-c\tau_2)t+(b-d\tau_2)}{(a-c\tau_1)t+(b-d\tau_1)}\\
&= \frac{b-d\tau_2}{b-d\tau_1}\frac{1+t\frac{c\tau_2-a}{d\tau_2-b}}{1+t\frac{c\tau_1-a}{d\tau_1-b}}\\
&= \frac{b-d\tau_2}{b-d\tau_1}\left( 1 + \frac{\ol g\tau_1-\ol g\tau_2}{\ol g \tau_1}\sum_{i=1}^\infty (-1)^n(\ol g\tau_1)^nt^n\right)
\end{align*}
where $\ol g$ is the matrix
\[
\ol g=\mtx{0}{-1}{1}{0} g^{-1}.
\]
Note that for any two matrices $g,h$ we have $\ol{hg}=\ol{g} h^{-1}$. Also, note that for any choice of $h$, a decomposition
\[
\PP^1(\Q_p)=\bigcup_i g_i\cdot\Z_p
\]
gives rise to another decomposition
\[
\PP^1(\Q_p)=\bigcup_i (hg_i)\cdot\Z_p.
\]
Therefore by choosing an appropriate $h\in\GL_2(\Q_p)\cap M_2(\Z)$ we can assume that $v_p(\tau_1-a) = 0$ for all
$a=0,1,\ldots, p-1$ and that $v_p(\tau_2)\geq 0$.

Note that in order to obtain a power series as in~\eqref{eq:pseriesexpansion}, the matrices $g=\smtx abcd$ that we consider should satisfy
\begin{align}
\label{eq:valuation}
v_p\left(\frac{c\tau_2-a}{d\tau_2-b}\right)&\geq 1.
\end{align}
The conditions on $\tau_1$ and $\tau_2$ imply that the matrix corresponding to the contribution of
$\PP^1(\QQ_p)\setminus\ZZ_p$ satisfies \eqref{eq:valuation}, and we  concentrate on the integral on $\ZZ_p$. Let $r\geq
1$ be the largest integer such that $\tau_2$ is congruent to some integer modulo $p^r$. Let $t_2\in\ZZ$ be a
representative for the class of $\tau_2 \pmod{p^r}$. Write also $t^{(i)}$ for
the representative of $t_2\pmod{p^i}$ in the range $0,\ldots,p^i-1$. We can then use the decomposition given by the
matrices $g$ in the set $G = \cup_{i=1}^{r+1} G_i$, where
\[
G_i=\left\{ \mtx{p^i}{t^{(i)}+bp^{i-1}}{0}{1}\mid b=1,\ldots,p-1\right\},\quad G_{r+1} = \left\{ \mtx{p^{r+1}}{p^rb}{0}{1}\mid b=0,\ldots,p-1\right\}.
\]
This decomposition consists of exactly $p + 1 + r(p-1)$ opens.
\subsection{Numerical computations}\label{subsection: numerical computations}
To test our methods we have written a Sage implementation of the above algorithms, modifying an existing implementation written by Robert Pollack which in turn adapted part of the code originally written in Magma by Darmon and Pollack (\cite{darmon-pollack}). The code can be found in the second author's webpage.

Given an elliptic curve  of conductor $N = pM$ and a quadratic field $K$, the code finds all the optimal embeddings of
level $N$ of $K$ into $M_2(\ZZ[\frac{1}{p}])$,  and computes the Stark-Heegner period corresponding to the fixed point
of $K$ acting on $\cH_p$ via each embedding to a prescribed precision. The Tate parametrization yields the
coordinates of the Stark-Heegner point on $E(K_p)$ which are then recognized as algebraic coordinates using standard
routines.

Apart from gathering numerical evidence in support of Darmon's conjecture, it is also worth remarking that the relative
large height of the points thus found would make it impossible to find them using naive point search methods. This is,
therefore, the only known method to finding points of infinite order on such curves.

The rest of this subsection contains the evidence that we have
collected in support of Conjecture~\ref{conjecture:darmon}. Although we do not intend to be exhaustive, we provide
examples of curves of small composite conductor which satisfy Assumption~\ref{assumption} and for
which we have been able to positively test the conjecture. For each of these curves we consider all the real
quadratic fields $K$ of discriminant $D<200$ allowed by the splitting conditions on $p$ and $M$; for each
such field, we consider $\tau\in \cH_p$ such that $H_\tau$ equals the Hilbert class field of $K$, and we are
able to recognize in all the cases $P_\tau$ as an algebraic point defined over the Hilbert class
field of $K$. For those fields with nontrivial class group, we give the relative minimal polynomial $h_D$ of the
$X$-coordinate of the point.
\begin{table}[H]
\setlength{\extrarowheight}{5pt}
\label{table:curve15a1}
\begin{tabular}{c|c|c}
$D$ & $h$ & $P^+$\\\hline\hline
$13$ & $1$ & $\left(-\sqrt{13} + 1, 2 \sqrt{13} - 4\right)$ \\
$28$ & $1$ & $\left(-15 \sqrt{7} + 43, 150 \sqrt{7} - 402\right)$ \\
$37$ & $1$ & $\left(-\frac{5}{9} \sqrt{37} + \frac{5}{9}, \frac{25}{27} \sqrt{37} - \frac{70}{27}\right)$ \\
$73$ & $1$ & $\left(-\frac{17}{32} \sqrt{73} + \frac{77}{32}, \frac{187}{128} \sqrt{73} - \frac{1199}{128}\right)$ \\
$88$ & $1$ & $\left(-\frac{17}{9}, \frac{14}{27} \sqrt{22} + \frac{4}{9}\right)$ \\
$97$ & $1$ & $\left(-\frac{25}{121} \sqrt{97} + \frac{123}{121}, \frac{375}{2662} \sqrt{97} - \frac{4749}{2662}\right)$ \\
$133$ & $1$ & $\left(\frac{103}{9}, \frac{92}{27} \sqrt{133} - \frac{56}{9}\right)$ \\
$172$ & $1$ & $\left(-\frac{1923}{1681}, \frac{11781}{68921} \sqrt{43} + \frac{121}{1681}\right)$ \\
$193$ & $1$ & $\left(\frac{1885}{288} \sqrt{193} + \frac{25885}{288}, \frac{292175}{3456} \sqrt{193} + \frac{4056815}{3456}\right)$ \\
\\\end{tabular}

\caption{Points on elliptic curve 15A1, with $p=5$}
\end{table}
\begin{table}[H]
\setlength{\extrarowheight}{5pt}
\label{table:curve21a1}
\begin{tabular}{c|c|c}
$D$ & $h$ & $P^+$\\\hline\hline
$8$ & $1$ & $\left(-9 \sqrt{2} + 11, 45 \sqrt{2} - 64\right)$ \\
$29$ & $1$ & $\left(-\frac{9}{25} \sqrt{29} + \frac{32}{25}, \frac{63}{125} \sqrt{29} - \frac{449}{125}\right)$ \\
$44$ & $1$ & $\left(-\frac{9}{49} \sqrt{11} - \frac{52}{49}, \frac{54}{343} \sqrt{11} + \frac{557}{343}\right)$ \\
$53$ & $1$ & $\left(-\frac{37}{169} \sqrt{53} + \frac{184}{169}, \frac{555}{2197} \sqrt{53} - \frac{5633}{2197}\right)$ \\
$92$ & $1$ & $\left(\frac{533}{46}, \frac{17325}{2116} \sqrt{23} - \frac{533}{92}\right)$ \\
$137$ & $1$ & $\left(-\frac{1959}{11449} \sqrt{137} + \frac{242}{11449}, \frac{295809}{2450086} \sqrt{137} - \frac{162481}{2450086}\right)$ \\
$149$ & $1$ & $\left(-\frac{261}{2809} \sqrt{149} + \frac{2468}{2809}, \frac{8091}{148877} \sqrt{149} - \frac{101789}{148877}\right)$ \\
$197$ & $1$ & $\left(-\frac{79135143}{209961032} \sqrt{197} + \frac{977125081}{209961032}, \frac{1439547386313}{1075630366936} \sqrt{197} - \frac{9297639417941}{537815183468}\right)$ \\&&\\\hline 
$D$ & $h$ & $h_D(x)$\\\hline\hline
$65$ & $2$  & $x^{2} + \left(\frac{61851}{6241} \sqrt{65} - \frac{491926}{6241}\right) x - \frac{403782}{6241} \sqrt{65} + \frac{3256777}{6241}$\\
\\\end{tabular}

\caption{Points on elliptic curve 21A1, with $p=3$}
\end{table}
\begin{table}[H]
\label{table:curve33a1}
\setlength{\extrarowheight}{5pt}
\begin{tabular}{c|c|c}
$D$ & $h$ & $P^+$\\\hline\hline
$13$ & $1$ & $\left(-\frac{1}{2} \sqrt{13} + \frac{3}{2}, \frac{1}{2} \sqrt{13} - \frac{7}{2}\right)$ \\
$28$ & $1$ & $\left(\frac{22}{7}, \frac{55}{49} \sqrt{7} - \frac{11}{7}\right)$ \\
$61$ & $1$ & $\left(-\frac{1}{2} \sqrt{61} + \frac{5}{2}, \sqrt{61} - 11\right)$ \\
$73$ & $1$ & $\left(-\frac{53339}{49928} \sqrt{73} + \frac{324687}{49928}, \frac{31203315}{7888624} \sqrt{73} - \frac{290996167}{7888624}\right)$ \\
$76$ & $1$ & $\left(-2, \sqrt{19} + 1\right)$ \\
$109$ & $1$ & $\left(-\frac{143}{2} \sqrt{109} + \frac{1485}{2}, \frac{5577}{2} \sqrt{109} - \frac{58223}{2}\right)$ \\
$172$ & $1$ & $\left(-\frac{51842}{21025}, \frac{2065147}{3048625} \sqrt{43} + \frac{25921}{21025}\right)$ \\
$184$ & $1$ & $\left(\frac{59488}{21609}, \frac{109252}{3176523} \sqrt{46} - \frac{29744}{21609}\right)$ \\
$193$ & $1$ & $\Big( \frac{94663533349261}{678412148664608} \sqrt{193} + \frac{1048806825770477}{678412148664608} ,$ \\
 &   & \hspace{1cm}$\frac{147778957920931299317}{12494688311813553741184} \sqrt{193} + \frac{30862934493092416035541}{12494688311813553741184}\Big)$ \\&&\\\hline 
$D$ & $h$ & $h_D(x)$\\\hline\hline
$40$ & $2$  & $x^{2} + \left(\frac{2849}{1681} \sqrt{10} - \frac{6347}{1681}\right) x - \frac{5082}{1681} \sqrt{10} + \frac{16819}{1681}$\\
$85$ & $2$  & $x^{2} + \left(\frac{119}{361} \sqrt{85} - \frac{1022}{361}\right) x - \frac{168}{361} \sqrt{85} + \frac{1549}{361}$\\
$145$ & $4$  & $x^{4} + \left(\frac{169016003453}{83168215321} \sqrt{145} - \frac{1621540207320}{83168215321}\right) x^{3}$\\
& & $+ \left(-\frac{1534717557538}{83168215321} \sqrt{145} + \frac{18972823294799}{83168215321}\right) x^{2} + \left(\frac{5533405190489}{83168215321} \sqrt{145} - \frac{66553066916820}{83168215321}\right) x$\\
& & $+ -\frac{6414913389456}{83168215321} \sqrt{145} + \frac{77248348177561}{83168215321}$\\
\\\end{tabular}

\caption{Points on elliptic curve 33A1, with $p=11$}
\end{table}
\begin{table}[H]
\label{table:curve35a1}
\setlength{\extrarowheight}{5pt}
\begin{tabular}{c|c|c}
$D$ & $h$ & $P^+$\\\hline\hline
$24$ & $1$ & $\left(\frac{12565}{19321} \sqrt{6} + \frac{31879}{19321}, \frac{4020800}{2685619} \sqrt{6} + \frac{12075417}{2685619}\right)$ \\
$41$ & $1$ & $\left(70 \sqrt{41} + 449, 2100 \sqrt{41} + 13443\right)$ \\
$61$ & $1$ & $\left(\frac{7444913385}{279945122} \sqrt{61} + \frac{58532610047}{279945122}, \frac{1805488279736505}{3312030738382} \sqrt{61} + \frac{14113780406002997}{3312030738382}\right)$ \\
$69$ & $1$ & $\left(\frac{63742245}{280513298} \sqrt{69} + \frac{526671623}{280513298}, \frac{1810980922695}{3322118988214} \sqrt{69} + \frac{16959961136217}{3322118988214}\right)$ \\
$76$ & $1$ & $\Big( -\frac{4398502037370}{1404725114521} \sqrt{19} + \frac{19299436937929}{1404725114521} ,$ \\
 &   & \hspace{1cm}$\frac{28304052715333334100}{1664895657706548931} \sqrt{19} - \frac{121810800584629037164}{1664895657706548931}\Big)$ \\
$89$ & $1$ & $\left(\frac{981}{100}, \frac{3563}{1000} \sqrt{89} - \frac{1}{2}\right)$ \\
$101$ & $1$ & $\left(\frac{7505}{10404}, \frac{310345}{1061208} \sqrt{101} - \frac{1}{2}\right)$ \\
$124$ & $1$ & $\left(-\frac{210}{1681} \sqrt{31} + \frac{12769}{1681}, \frac{35700}{68921} \sqrt{31} - \frac{1678197}{68921}\right)$ \\
$129$ & $1$ & $\Big( \frac{9526581863470}{129638878212649} \sqrt{129} + \frac{154639065911401}{129638878212649} ,$ \\
 &   & \hspace{1cm}$\frac{227155723851142702700}{1476056210913547737643} \sqrt{129} + \frac{5024283358306642389249}{1476056210913547737643}\Big)$ \\
$181$ & $1$ & $\left(-\frac{4166720}{31843449}, \frac{7889580565}{359385165414} \sqrt{181} - \frac{1}{2}\right)$ \\&&\\\hline 
$D$ & $h$ & $h_D(x)$\\\hline\hline
$104$ & $2$  & $x^{2} - \frac{87841}{9522} x + \frac{85397}{6348}$\\
$136$ & $2$  & $x^{2} + \left(\frac{132755895957027}{12703756878289} \sqrt{34} - \frac{805260717153160}{12703756878289}\right) x - \frac{4207164401474475}{12703756878289} \sqrt{34} + \frac{24540106232139359}{12703756878289}$\\
\\\end{tabular}

\caption{Points on elliptic curve 35A1, with $p=7$}
\end{table}
\begin{table}[H]
\setlength{\extrarowheight}{5pt}
\label{table:curve51a1}
\begin{tabular}{c|c|c}
$D$ & $h$ & $P^+$\\\hline\hline
$8$ & $1$ & $\left(\frac{1}{2}, \frac{1}{4} \sqrt{2} - \frac{1}{2}\right)$ \\
$53$ & $1$ & $\left(\frac{3}{2} \sqrt{53} + \frac{23}{2}, \frac{15}{2} \sqrt{53} + \frac{107}{2}\right)$ \\
$77$ & $1$ & $\left(\frac{5559}{55778} \sqrt{77} + \frac{78911}{55778}, \frac{2040153}{9314926} \sqrt{77} + \frac{17804737}{9314926}\right)$ \\
$89$ & $1$ & $\left(\frac{793511}{2401}, \frac{150079871}{235298} \sqrt{89} - \frac{1}{2}\right)$ \\
$101$ & $1$ & $\Big( -\frac{656788148124048}{108395925566683225} \sqrt{101} + \frac{108663526315570777}{108395925566683225} ,$ \\
 &   & \hspace{1cm}$\frac{432742605985104670344096}{35687772118459783422252125} \sqrt{101} - \frac{71551860216079551941383354}{35687772118459783422252125}\Big)$ \\
$137$ & $1$ & $\left(\frac{83}{81}, \frac{193}{1458} \sqrt{137} - \frac{1}{2}\right)$ \\
$149$ & $1$ & $\Big( -\frac{41662615293}{110013332450} \sqrt{149} + \frac{802189306199}{110013332450} ,$ \\
 &   & \hspace{1cm}$\frac{39791672228037249}{25801976926160750} \sqrt{149} - \frac{635290450369692907}{25801976926160750}\Big)$ \\
$152$ & $1$ & $\Big( -\frac{1915814571}{20670100441} \sqrt{38} + \frac{24731592007}{20670100441} ,$ \\
 &   & \hspace{1cm}$\frac{577303899566856}{2971761010503011} \sqrt{38} - \frac{7167395643538198}{2971761010503011}\Big)$ \\
$161$ & $1$ & $\left(\frac{62146167667}{49710362300}, \frac{8395974419456303}{53153799096521000} \sqrt{161} - \frac{1}{2}\right)$ \\
$188$ & $1$ & $\Big( \frac{3178296211866}{1135825194001} \sqrt{47} + \frac{22525829850817}{1135825194001} ,$ \\
 &   & \hspace{1cm}$\frac{21864116230891316004}{1210506836331759751} \sqrt{47} + \frac{148356498531472446055}{1210506836331759751}\Big)$ \\&&\\\hline 
$D$ & $h$ & $h_D(x)$\\\hline\hline
$104$ & $2$  & $x^{2} + \left(-\frac{992302702743}{1960400420449} \sqrt{26} - \frac{57132410901980}{1960400420449}\right) x - \frac{4968445297101}{1960400420449} \sqrt{26} + \frac{61480175149213}{1960400420449}$\\
$140$ & $2$  & $x^{2} - \frac{7073157}{13924} x + \frac{398237221}{55696}$\\
$185$ & $2$  & $x^{2} + \left(-\frac{908505900}{7532677681} \sqrt{185} - \frac{54207252962}{7532677681}\right) x - \frac{787814100}{7532677681} \sqrt{185} + \frac{45005684581}{7532677681}$\\
\\\end{tabular}

\caption{Points on elliptic curve 51A1, with $p=3$}
\end{table}
\begin{table}[H]
\setlength{\extrarowheight}{5pt}
\label{table:curve105a1}
\begin{tabular}{c|c|c}
$D$ & $h$ & $P^+$\\\hline\hline
$29$ & $1$ & $2 \cdot \left(\frac{5}{2} \sqrt{29} + \frac{29}{2}, \frac{25}{2} \sqrt{29} + \frac{133}{2}\right)$ \\
$44$ & $1$ & $\left(\frac{47}{36}, \frac{13}{54} \sqrt{11} - \frac{83}{72}\right)$ \\
$149$ & $1$ & $\left(\frac{41297}{48050} \sqrt{149} + \frac{554429}{48050}, \frac{28371039}{7447750} \sqrt{149} + \frac{340434623}{7447750}\right)$ \\
\\\end{tabular}

\caption{Points on elliptic curve 105A1, with $p=3$}
\end{table}

\section{Computation of ATR Darmon points on curves of non-trivial conductor}\label{section: ATR}
In Section \ref{section: p-adic} we have seen that the algorithm of Theorem \ref{theorem: our main theorem} can be used
 in the computation of the semi-indefinite integrals entering the definition of $p$-adic Darmon points. It is a
substitute for the continued fractions trick of 
\cite{darmon-green} and
\cite{darmon-pollack}.

There is another type of Darmon points, called ATR, whose definition also relies in certain
semi-indefinite integrals. Although the framework is different (e.g., they are points on elliptic curves over number
fields, and the integrals are complex instead of $p$-adic), the formal properties satisfied by the semi-indefinite
integrals are the same in both settings. In the ATR  case, the continued fraction algorithm over number
fields had been used for computing ATR points on curves with trivial conductor (cf. \cite{darmon-logan},
\cite{guitart-masdeu}). Using a method analogous
to that of Section \ref{subsection:
computation of semi-indefinite integrals},  Theorem \ref{theorem: our main theorem} also allows for the computation of
ATR points on curves with non-trivial conductor.

To be more precise, let $F$ be a real quadratic number field of narrow class number $1$ and let $\cO$ denote its ring
of integers. Let
$E$ be
an elliptic curve over  $F$ of conductor $\fN$, and let  $\Gamma$ be the congruence subgroup consisting
of matrices in $\SL_2(\cO)$ that are upper triangular modulo $\fN$. Assuming that $E$ is modular, there is a Hilbert
modular form $f$ of parallel weight two and level $\fN$ whose $L$-series coincides with that of $E$. Let $\omega_f$
denote the corresponding $\Gamma$-invariant differential $2$-form on $\cH\times \cH$ (with $\Gamma$ acting on it via the
product of the two embeddings of $F$ into $\R$).

The following is analogous to Assumption \ref{assumption}.
\begin{assumption}
 There exists an ideal $\mathfrak D\mid \fN$ such that $f$ has eigenvalue $1$ for the Atkin--Lehner operator
$W_{\mathfrak D}$. 
\end{assumption}
Let $\tilde \Gamma$  be the subgroup of $\PGL_2(F)$ generated by
$\Gamma$ and the Atkin--Lehner matrix corresponding
to $W_{\mathfrak D}$. The previous assumption guarantees that
$0,\infty\in\P^1(F)$ are $\tilde \Gamma$-related.

Let $K$ be a quadratic ATR extension of $F$; i.e., a quadratic extension of $F$
that has exactly one non-real
archimedean place. Suppose that all primes dividing $\fN$ are split in $K$. We refer the reader to \cite{darmon-logan}
and \cite[\S8]{darmon-book} for the definition of the semi-indefinite integrals in this setting. We will just mention
that they are expressions of the form
\[
 \int^\tau\int_x^y\omega_f^+\in\C/\Lambda_f,
\]
where  $\tau\in\cH$, $x,y\in \P^1(F)$ are in the same $\tilde\Gamma$-orbit, $\Lambda_f$ is a certain period
lattice that depends on $f$, and
$\omega_f^+$ is a non-holomorphic differential easily related to $\omega_f$. They satisfy analogous properties
to those of $p$-adic semi-indefinite integrals; namely
\begin{enumerate}[(i)]
 \item\label{enum:p1} $\int^{\gamma\tau}\int_{\gamma r}^{\gamma s} \omega_f^+=\int^{\tau}\int_{r}^{s}
\omega_f^+$ for all $\gamma\in \tilde\Gamma,\ \ r,s\in \tilde\Gamma x$,
\item\label{enum:p2} $\int^{\tau}\int_{r}^{s} \omega_f^+ + \int^{\tau}\int_{r}^{s}
\omega_f^+=\int^{\tau}\int_{r}^{s} \omega_f^+$,  for all $\tau\in\cH,\ \ r,s\in \tilde\Gamma x$,
\item\label{enum:p3} $\int^{\tau_2}\int_{r}^{s} \omega_f^+-\int^{\tau_1}\int_{r}^{s}
\omega_f^+=\int_{\tau_1}^{\tau_2}\int_{r}^{s} \omega_f^+$,   for all $\tau_1,\tau_2\in\cH,\ \ r,s\in \tilde\Gamma x$.
\end{enumerate}

ATR Darmon points are given by expressions of the form
$ P_\tau=\Phi(\lambda\int^\tau\int_\infty^{\gamma_\tau\infty}\omega_f^+),$ where $\gamma_\tau$ belongs to $\Gamma$, 
$\Phi$ is the complex
uniformization map $\Phi\colon \C/\Lambda_E\ra E(\C)$, and $\lambda$ is a period conjecturally relating $\Lambda_E$ and
$\Lambda_f$.
At the cost of replacing $P_\tau$ by a multiple of it, we can
assume that $\gamma_\tau$ actually belongs to $\Gamma_1(\fN)$ (the notation is as in \eqref{eq: dec of gamma}, with $S$
equal to the archimedean places of $F$). Then the algorithm of Theorem \ref{theorem: our main theorem} computes a
decomposition of $\gamma_\tau$ of the form
\begin{equation}
 \gamma_\tau=U_1 L_1 U_2L_2 U_3,
\end{equation}
where the matrices $U_i$ and $L_i$ are of the form
\[
 U_i=\mtx{1}{x_i}{0}{1}\  \text{for some } x_i\in\cO, \ \
  L_i=\mtx{1}{0}{y_i}{1} \text{ for some }
y_i\in \fN.
\]
In particular, they belong to $\Gamma$ and the same expressions of \eqref{eq: upper triangular} and \eqref{eq: lower
triangular} (changing the multiplicative by the additive notation) express
$\int^\tau\int_\infty^{\gamma_\tau\infty}\omega_f^+$ in terms of usual double integrals of the form
$\int_{\tau_1}^{\tau_2}\int_0^\infty\omega_f^+$, which in principle can be evaluated by integrating the Fourier
expansion of $\omega_f^+$.

It is worth remarking that, although the method described above certainly expresses the semi-indefinite integrals in
terms of definite ones,  the running time to directly compute the resulting double integrals to a useful accuracy often
turns out to be too high.  The problem is that if the limits of integration are too close
to the real axis, then the number of Fourier coefficients needed to sum the series to an accurate precision is too
high. 

These kind of computational difficulties seem to be inherent to the ATR setting, as they were also to some extent
present in the initial work of Darmon and Logan \cite{darmon-logan}.  In \cite{guitart-masdeu} some methods for
accelerating the
computation of double integrals in the trivial level case were introduced. The authors believe that similar
techniques
should be applied to the non-trivial level setting in order to  perform a systematical calculation similar to the one of
Section \ref{subsection: numerical computations}. 

In spite of  this, in some simple examples it is possible to directly compute the integrals provided by Theorem
\ref{theorem: our main theorem},
and hence to compute approximations to the ATR points. The following is an example of this, which we detail because it
provides numerical evidence of the validity of Darmon's conjecture in elliptic curves of non-prime conductor.

\begin{example}
Let $F=\Q(\sqrt{5})$ and let $E$ be the curve
\[
 y^2+xy+\omega y = x^3-(\omega+1)x^2-(30\omega+45)x-(111\omega+117), \ \ \omega=\frac{1+\sqrt 5}{2}.
\]
The conductor of $E$ is  $\fN=(\sqrt{5}+6)$, which has norm $31$. This curve was previously
considered in \cite{Gr-th} and \cite{Ga-th} (but note the typo in the displayed equation in both references). Let
$\alpha=1-\sqrt{5}$ and let $K=F(\sqrt{\alpha})$. The embedding $\varphi\colon K\hookrightarrow \M_2(F)$ sending
$\omega$ to the matrix 
\[
W=\mtx{3-\omega}{-1}{8-3\omega}{-3+\omega}
\]
is an optimal embedding of level $\fN$. Under the embedding $F\hookrightarrow \R$ sending $\sqrt{5}$ to the positive
square root of $5$, the fixed point of $W$ acting on $\C^\times$ is 
\[
 \tau=0.439291418991 + i\cdot 0.353408129753,
\]
and the image of the unit $(-3+2\omega)\omega +4-3\omega\in\cO_K^\times$ under $\varphi$ is
\[
 \gamma_\tau=\mtx{-4+3\omega}{2-2\omega}{-22+16\omega}{12-9\omega}.
\]
The ATR point attached to (the maximal order of) $K$ is 
\[
 J_\tau=\int^\tau\int_\infty^{\gamma_\tau\infty}\omega_f^+.
\]
The determinant of $\gamma_\tau$ is $\omega+1$, which is a unit. However its upper left entry is not congruent to $1$
modulo $\fN$. If we let $u=-\sqrt{5}-2$, which is a  fundamental unit of $F$, then $-4+3\omega\equiv u
\pmod \fN$. This implies that the matrix $\gamma_\tau'=\smtx{u^{-1}}{0}{0}{u}\gamma_\tau$ has its upper left entry
congruent to $1$ modulo $\fN$. We can work with $\gamma_\tau'$ because 
\begin{equation}\label{eq: ATR point example}
 J_\tau=\int^{u^{-2}\tau}\int_\infty^{\gamma_\tau'\infty}\omega_f^+.
\end{equation}
Observe that $\det(\gamma_\tau')=w+1$, which is a unit. The algorithm of Theorem \ref{theorem: our main theorem} works
for invertible matrices,
not just determinant $1$ matrices, and it gives the following decomposition of $\gamma_\tau'$:
\[
 \smtx{1}{1-w}{0}{1}\smtx{1}{0}{118739 -73384\omega}{1}\smtx{1}{46368+
75025\omega}{0}{1}\smtx{1}{0}{-5431444+\omega 3356817}{1}\smtx{1}{-37268-60300\omega}{0}{1+w}.
\]
We use this decomposition to transform \eqref{eq: ATR point example} into a sum of usual double integrals. The
resulting integrals have limits not too close to the real axis (the smallest imaginary part is $\simeq 0.011$). 
Integrating the Fourier series with coefficients $a_{\fm}$ with norm of $\fm$ up to $180,000$ gives $J_\tau$ to
an accuracy of
approximately $12$ digits: 
\[
 J_\tau\simeq -4.828954817077 +i\cdot 4.534696532333.
\]
There is a point $P$ of infinite order in $E(K)$  having $x$-coordinate equal to $18883/2420\alpha - 16127/2420$ (this
was found using naive search algorithms); let $J$ denote its corresponding image in $\C/\Lambda_E$. Then the
equality
\[
 J_\tau=-2J \pmod{\Lambda_E}
\]
holds up to the computed accuracy, giving numerical evidence of the equality $P_\tau=-2P$ and, therefore, of the
rationality of $P_\tau$.

\end{example}

 \bibliographystyle{halpha}
 \bibliography{refs}

\end{document}